\theoremstyle{plain}
\newtheorem{thm}{Theorem}[section]
\newtheorem{prop}{Proposition}[section]
\theoremstyle{definition}
\newtheorem{ex}{Example}[section]
\newcommand{\rrvert}{\vert}
\newcommand{\llvert}{\vert}
\begin{document}

\renewcommand{\Re}{\mathbb{R}}
\newcommand{\Ff}{\mathcal{F}}
\newcommand{\prt}{\partial}

\begin{frontmatter}

\title{Rates of approximation of nonsmooth integral-type functionals
of Markov processes}

\author[a]{\inits{Iu.}\fnm{Iu.}\snm{Ganychenko}}\email{iurii\_ganychenko@ukr.net}
\author[b]{\inits{A.}\fnm{A.}\snm{Kulik}\corref{cor1}}\email{kulik.alex.m@gmail.com}
\cortext[cor1]{Corresponding author.}
\address[a]{Taras Shevchenko National University of Kyiv, Kyiv, Ukraine}
\address[b]{Institute of Mathematics, National Academy of Sciences of Ukraine, Kyiv,
Ukraine}
\markboth{Iu. Ganychenko, A. Kulik}{Rates of approximation of
non-smooth integral type functionals of Markov processes}


\begin{abstract}
We provide strong $L_p$-rates of approximation of nonsmooth integral-type functionals of Markov processes by integral sums. Our approach is, in a sense,
process insensitive and is based on a modification of some
well-developed estimates from the theory of continuous additive
functionals of Markov processes.
\end{abstract}

\begin{keyword}
Markov process\sep
integral functional\sep
approximation rate
\MSC[2010]
60H07\sep
60H35
\end{keyword}

\received{5 April 2014}
\revised{28 May 2014}
\accepted{27 August 2014}
\publishedonline{15 September 2014}
\end{frontmatter}

\section{Introduction}

Let $X_t$, $t\geq0$, be an  $\mathbb{R}^d$-valued Markov process. We study an integral functional
\[
I_T(h)=\int_0^Th(X_t)\, dt
\]
of this process. The most natural numerical scheme to approximate such
a~functional is the sequence of integral sums
\[
I_{T,n}(h)={T\over n}\sum_{k=0}^{n-1}h(X_{(kT)/n}), \quad n\geq1,
\]
and the main objective of this paper is to study approximation rates
within this scheme. The function $h$, in general, is not assumed to be
smooth, and therefore the mapping
\[
\bigl\{x_t, t\in[0,T] \bigr\}\mapsto\int_0^Th(x_t)\, dt
\]
may fail to be Lipschitz continuous (and even simply continuous) on a
natural functional space of the trajectories of $X$ (e.g., $C(0,T)$ or
$D(0,T)$). This makes it impossible to carry out the error analysis
with a classical technique (see, e.g., \cite{KloedenPlaten}). The typical case
of  interest here is $h=1_A$, with $I_T(h)$ being respectively the
occupation time of $X$ at the set $A$ up to the time moment $T$.

In the paper, we establish strong $L_p$-approximation rates, that is, the
bounds for
\[
E \bigl|I_{T}(h)-I_{T,n}(h) \bigr|^p.
\]
Our research is strongly motivated by the recent paper \cite
{Kohatsu-Higa}, where such a problem was studied in a particularly
important case where  $X$ is a one-dimensional diffusion, and we refer
the reader to \cite{Kohatsu-Higa} for more motivation and background on
the subject. The technique developed in \cite{Kohatsu-Higa}, involving
both the Malliavin calculus tools and the Gaussian bounds for the
transition probability density, relies substantially on the structure
of the process, and hence it seems not easy to extend this approach to
other classes of processes, for example, multidimensional diffusions or
solutions to L\'evy driven SDEs.

We would like to explain in this note  that, in order to get the
required approximation rates, one can modify some well-developed
estimates from the theory of continuous additive functionals of Markov
processes. An advantage of such approach is that the assumptions on the
process are formulated only in  terms of its  transition
probability density and therefore are quite flexible. The basis for
the approach is given by the fact that the weak approximation rates for
\[
EI_{T}(h)-EI_{T,n}(h)
\]
are available as a consequence of a bound for the derivative w.r.t. $t$
of the transition probability density; see \cite{Gobet}, Theorem 2.5,
and Proposition \ref{p21} below. To explain the principal idea of the
approach, let us assume for a while that $h$ is nonnegative and
bounded. Then the integral functional $I_T(h)$ is a \emph
{$W$-functional} of the process $X$; see \cite{Dynkin}, Chapter 6. It
is well known that the properties of a $W$-functional are mainly
controlled by its \emph{characteristic}, that is, the expectation
\[
E_xI_T(h).
\]
In particular, the convergence of characteristics implies the
$L_2$-convergence of the respective functionals. The core of our approach
is that we extend the Dynkin's technique for a study of convergence of
$W$-functionals and give approximation rates for integral functionals
$I_{T}(h)$ by difference functionals $I_{T,n}(h)$, based on the weak
approximation rates for their expectations. We remark that now we are
beyond the scopes of the original Dynkin's theory because $I_{T}(h)$
\emph{may fail} to be a $W$-functional (we do not assume $h$ to be
nonnegative), and $I_{T,n}(h)$ \emph{definitely fails} to be a
$W$-functional. In addition, Dynkin's theory addresses $L_2$-bounds, whereas, in general, we are interested in $L_p$-bounds. This brings some
extra difficulties, which however are not really substantial, and we
resolve them in a way similar to the one used in the classical
Khas'minskii lemma; see, for example, Lemma 2.1 in \cite{Sznitman}.

\section{Main results}\begingroup\abovedisplayskip=7pt\belowdisplayskip=7pt
\subsection{Notation, assumptions, and auxiliaries}
In what follows, $P_x$ denotes the law of the Markov process with
$X_0=x$, and $E_x$ denotes the expectation w.r.t. this law. The natural
filtration of the process $X$ is denoted by $\{\Ff_t, t\geq0\}$. The
process $X$ is assumed to possess a transition probability density,
denoted below by $p_t(x,y)$. By $C$ we denote a generic constant; the
value of $C$ may vary from place to place. Both the absolute value of a
real number and the Euclidean norm in $\Re^d$ are denoted by $|\cdot|$.

Our standing assumption on the process $X$ under investigation is the following.
\begin{enumerate}
\item[\textbf{X.}] The transition probability density $p_t(x,y)$ is
differentiable w.r.t. $t$ and satisfies
\begin{align}
\label{dens_bound}
p_t(x,y)
&{}\leq C_Tt^{-d/\alpha} Q \bigl(t^{-1/\alpha}(x-y) \bigr), \quad t\leq T,\\
\label{der_bound}
\bigl|\prt_tp_t(x,y)\bigr|
&{}\leq C_Tt^{-1-d/\alpha}
Q \bigl(t^{-1/\alpha}(x-y) \bigr), \quad t\leq T,
\end{align}
with some fixed $\alpha\in(0,2]$ and distribution density $Q$.
\end{enumerate}

The assumption \textbf{X} is motivated by the following class of
processes of particular interest.

\begin{ex}
Let $X$ be a symmetric $\alpha$-stable process with $\alpha
\in(0,2]$; in the case $\alpha=2$ this is just a Brownian motion. Then
\[
p_t(x,y)=t^{-d/\alpha}g^{(\alpha)} \bigl(t^{-1/\alpha}(x-y)
\bigr)
\]
with $g^{(\alpha)}$ being the distribution density of $X_1$.
Respectively, (\ref{der_bound}) holds  with $Q=Q_\alpha$,
\[
Q_\alpha(x)= %
\begin{cases}
c_1e^{-c_2|x|^2},         & \alpha=2,\\
{c\over1+|x|^{d+\alpha}}, &\alpha\in(0,2),
\end{cases} %
\]
where $c_2<(2EX_1^2)^{-1}$ and $c_1, c$ should be chosen such that
$\int_{\Re^d}Q(x)\, dx=1$.

Observe that, in a sense, this bound is ``stable under perturbations
of the process $X$.'' Namely, if $X$ is a uniformly elliptic diffusion
with H\"older continuous coefficients, then (\ref{der_bound}) with
$Q=Q_2$ and properly chosen $c_2$ is provided by the classical \emph
{parametrix method}; see \cite{Fr64}. An analogue of the parametrix
method for $\alpha$-stable generators with state-dependent coefficients
yields the bound (\ref{der_bound}) with $Q=Q_{\alpha'}$, $\alpha'<\alpha$,
for $\alpha$-stable driven processes $X$; see \cite{EIK04} and \cite{Ko89}.
\end{ex}

Our principal assumption on the function $h$ is the following.
\begin{enumerate}
\item[\textbf{H1.}]
The function $h$ satisfies
\[
\sup_{x}{|h(x)|\over V(|x|)}<\infty,
\]
where $V:\Re^+\to[1, +\infty)$ is a fixed function such that
\begin{itemize}
\item$\int_{\Re^d}V(T|x|)Q(x)\, dx<\infty$, $T\geq0$;
\item$V(r_1)\leq V(r_2)$, $r_1\leq r_2$;
\item$V$ is submultiplicative, that is,
\[
V(r_1+r_2)\leq V(r_1)V(r_2),
\quad r_1,r_2\in\Re^+.
\]
\end{itemize}
\end{enumerate}
Observe that for a bounded $h$, condition \textbf{H1} holds
trivially with $V\equiv1$. On the other hand, in particular cases, one
can weaken the assumptions on $h$ by using nontrivial ``weight
functions'' $V$. For instance, if $Q=Q_\alpha$ from the above example,
then one can take
\[
V(r)= %
\begin{cases}e^{Cr},& \alpha=2,\\
(1+r)^\beta,        &\alpha\in(0,2),
\end{cases}\quad  r \in\Re^+,
\]
with arbitrary $C$ and $\beta\in(0, \alpha)$. We denote
\[
\|h\|_V=\sup_{x}{|h(x)|\over V(|x|)}.
\]
The following auxiliary statement is crucial for the whole approach.
Its proof is completely analogous to the proof of (a part of) Theorem
2.5 \cite{Gobet}, but in order to make the exposition self-sufficient,
we give it here.

\begin{prop}\label{p21} Let \textbf{X} and \textbf{H1} hold. Then
\[
\bigl|E_xI_{T}(h)-E_xI_{T,n}(h) \bigr|
\leq
\biggl({\log{n}\over n} \biggr)C_{T,Q,V}\|h\|_V V\bigl(|x|\bigr)
\]
with
\[
C_{T,Q,V}=T \max \biggl\{C_T \biggl(\int_{\Re^d}Q(y)V
\bigl(T^{1/\alpha
}|y| \bigr)\, dy \biggr), 1 \biggr\}.
\]
\end{prop}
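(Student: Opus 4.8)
The plan is to compare $E_xI_T(h)$ and $E_xI_{T,n}(h)$ interval by interval on the mesh $t_k=kT/n$, exploiting the time-derivative bound (\ref{der_bound}) on every subinterval that stays away from the origin. The estimate I would use repeatedly is obtained by the substitution $z=s^{-1/\alpha}(x-y)$ together with the monotonicity and submultiplicativity of $V$ (which give $V(|x-s^{1/\alpha}z|)\le V(|x|)V(T^{1/\alpha}|z|)$ when $s\le T$): for every $s\le T$,
\[
\int_{\Re^d}Q\bigl(s^{-1/\alpha}(x-y)\bigr)V\bigl(|y|\bigr)\,dy\le s^{d/\alpha}V\bigl(|x|\bigr)J,\qquad J:=\int_{\Re^d}Q(z)V\bigl(T^{1/\alpha}|z|\bigr)\,dz,
\]
where $J<\infty$ is precisely assumption \textbf{H1}. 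Combined with (\ref{dens_bound}) this yields $E_x|h(X_t)|\le C_T\|h\|_VV(|x|)J$ for $t\le T$, so $t\mapsto E_xh(X_t)$ is integrable on $[0,T]$, Fubini's theorem gives $E_xI_T(h)=\int_0^TE_xh(X_t)\,dt$, and, since each $[t_k,t_{k+1}]$ has length $T/n$ and $E_xh(X_{t_0})=h(x)$,
\[
E_xI_T(h)-E_xI_{T,n}(h)=\sum_{k=0}^{n-1}\int_{t_k}^{t_{k+1}}\bigl(E_xh(X_t)-E_xh(X_{t_k})\bigr)\,dt .
\]

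For the terms with $k\ge1$ one has $[t_k,t_{k+1}]\subset[T/n,T]$, and I would write $p_t(x,y)-p_{t_k}(x,y)=\int_{t_k}^t\prt_sp_s(x,y)\,ds$ (valid by \textbf{X} and (\ref{der_bound})), estimate $|\prt_sp_s|$ via (\ref{der_bound}), interchange the $s$- and $y$-integrals by Tonelli's theorem, and apply the displayed estimate above, obtaining for $t\in[t_k,t_{k+1}]$
\[
\bigl|E_xh(X_t)-E_xh(X_{t_k})\bigr|\le C_T\|h\|_VV\bigl(|x|\bigr)J\int_{t_k}^t\frac{ds}{s}\le C_T\|h\|_VV\bigl(|x|\bigr)J\,\log\frac{k+1}{k}.
\]
Integrating over $[t_k,t_{k+1}]$ and summing, the telescoping identity $\sum_{k=1}^{n-1}\log\frac{k+1}{k}=\log n$ --- which is exactly where the factor $(\log n)/n$ arises --- bounds the $k\ge1$ part of the sum by $\frac{\log n}{n}TC_TJ\,\|h\|_VV(|x|)\le\frac{\log n}{n}C_{T,Q,V}\|h\|_VV(|x|)$, since $TC_TJ\le C_{T,Q,V}$.

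The step I expect to be the main obstacle is the single remaining term $k=0$, for which (\ref{der_bound}) is useless because of its non-integrable singularity at $s=0$; there I would fall back on the crude estimate $|E_xh(X_t)-h(x)|\le E_x|h(X_t)|+|h(x)|\le(C_TJ+1)\|h\|_VV(|x|)$, valid by the bound on $E_x|h(X_t)|$ above and $V\ge1$. Integrating over $[0,T/n]$ this adds at most $\frac{T}{n}(C_TJ+1)\|h\|_VV(|x|)\le\frac{2}{n}C_{T,Q,V}\|h\|_VV(|x|)$, which is $O(1/n)$ and hence, for $n\ge2$ (for $n=1$ the right-hand side of the assertion vanishes, so $n\ge2$ is understood), is dominated by a fixed multiple of $(\log n)/n$. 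Adding the two contributions gives a bound of the form $\frac{\log n+2}{n}C_{T,Q,V}\|h\|_VV(|x|)$, and absorbing the lower-order $2/n$ into the logarithmic term --- equivalently, enlarging $C_{T,Q,V}$ by a universal numerical factor --- produces the asserted inequality. In short, everything except the separate crude estimate on the first interval is routine Fubini/Tonelli/fundamental-theorem bookkeeping, and the sharp $(\log n)/n$ rate is supplied entirely by the $k\ge1$ sum.
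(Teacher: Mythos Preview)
Your proof is correct and follows essentially the same route as the paper's: the same split into the first subinterval (handled crudely via (\ref{dens_bound})) and the remaining $k\ge1$ subintervals (handled via the fundamental theorem of calculus and (\ref{der_bound})), with the $\log n$ factor arising from $\sum_{k=1}^{n-1}\log\frac{k+1}{k}=\log n$ in your version versus $\int_{T/n}^T s^{-1}\,ds=\log n$ in the paper's kernel formulation. The only difference is cosmetic---the paper packages the $k\ge1$ terms into a single kernel $K_{n,T}(x,y)$ before estimating---and both arguments share the same harmless sloppiness about the exact constant and the vacuous case $n=1$.
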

\begin{proof} Write
\begin{align*}
E_xI_{T}(h)-E_xI_{T,n}(h)
&{} = \int_{0}^{T/n}\int_{\Re^d}p_t(x,y)h(y)\,dydt-\frac{T}{n}h(x)\\
&\quad{}+
\sum_{k=2}^n\int_{(k-1)T/n}^{kT/n}\int_{\Re^d}
\bigl(p_t(x,y)-p_{(k-1)T/n}(x,y)\bigr)h(y)\,dydt.
\end{align*}
We have, by the bound for $p_t(x,y)$ in (\ref{dens_bound}) and
properties of $V$,
\begin{align}
\hspace*{-1pt}\biggl\llvert\!\int_{0}^{T/n}\!\int_{\Re^d}\!p_t(x,y)h(y)\,dydt\biggr\rrvert
&{}\leq
C_T\|h\|_V\!\int_{0}^{T/n}\!\!\!t^{-d/\alpha}\!\!\int_{\Re^d}\!Q \bigl(t^{-1/\alpha}(x-y)\bigr)V(|y|)\, dydt
\nonumber\quad\mbox{}\\
&{}=
C_T\|h\|_V\int_{0}^{T/n}\int_{\Re^d}Q(z)V \bigl(|x+t^{1/\alpha}z| \bigr)\, dzdt
\nonumber\\
&{}\leq
n^{-1}T C_T\|h\|_V V\bigl(|x|\bigr)\int_{\Re^d}Q(z)V \bigl(T^{1/\alpha}|z| \bigr)\, dz.
\label{11} %
\end{align} %
Next,
\[
\sum_{k=2}^n\int_{(k-1)T/n}^{kT/n}
\int_{\Re^d} \bigl(p_t(x,y)-p_{(k-1)T/n}(x,y)
\bigr)h(y)\,dydt=\int_{\Re
^d}K_{n,T}(x,y)h(y)\, dy
\]
with
\[
\begin{aligned}K_{n,T}(x,y)&=\sum
_{k=2}^n \int_{(k-1)T/n}^{kT/n}
\int_{(k-1)T/n}^t \prt_sp_s(x,y)
\,dsdt
\\
&=\sum_{k=2}^n\int_{(k-1)T/n}^{kT/n}
\biggl({kT\over n}-s \biggr)\prt_sp_s(x,y)
\,ds. \end{aligned} %
\]
Then
\[
\bigl|K_{n,T}(x,y)\bigr|
\leq
{T\over n}
\int_{T/n}^T\bigl|\prt_sp_s(x,y)\bigr|\,ds,
\]
and therefore, using the bound for $\prt_tp_t(x,y)$ in (\ref
{der_bound}), we obtain, similarly to (\ref{11}),
\begin{align*}
\biggl\llvert\int_{\Re^d}\!\!K_{n,T}(x,y)h(y)
\, dy \biggr\rrvert
&{}\leq
{T\over n}C_T\|h\|_V\!\!\int_{T/n}^T\!\!s^{-1}\!\! \int_{\Re^d}\!s^{-d/\alpha}Q \bigl(s^{-1/\alpha}(x-y)\bigr)V\bigl(|y|\bigr)\, dyds
\\
&{}\leq
{T\over n}C_T\|h\|_V V\bigl(|x|\bigr) \biggl(\int_{\Re^d}\!Q(y)V \bigl(T^{1/\alpha}|y| \bigr)\, dy
\biggr)\int_{T/n}^T\!s^{-1}\, ds
\\
&{}=
{T\over n} (\log{n})C_T\|h\|_V V\bigl(|x|\bigr)\biggl(\int_{\Re^d}Q(y)V \bigl(T^{1/\alpha}|y| \bigr)\, dy\biggr),
\end{align*}
which completes the proof.
\end{proof}\endgroup

\subsection{Approximation rate in  terms of \texorpdfstring{$\|h\|_V$}{$||h||_V$}}

Our main estimate, in a shortest and most transparent form, is
presented in the following theorem, which concerns the case where the
only assumption on~$h$ is that the weighted sup-norm $\|h\|_V$ is finite.

\begin{thm}\label{t1} Let \textbf{X} and \textbf{H1} hold. Then
for every $p\geq2$ such that
\[
\int_{\Re^d}V^p \bigl(T^{1/\alpha}|x|
\bigr)Q(x) \, dx<\infty,
\]
we have
\[
E_x \bigl|I_{T}(h)-I_{T,n}(h) \bigr|^p\leq C
\biggl({\log n\over n} \biggr)^{p/2}\|h\|^p_V
V^p\bigl(|x|\bigr)
\]
with constant $C$ depending on $T,Q,V,p$ only.
\end{thm}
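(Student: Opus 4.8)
The plan is to follow the classical Khas'minskii-type argument, bootstrapping the weak (first-moment) estimate of Proposition~\ref{p21} into an $L_p$-bound via the Markov property and an induction on moments. Write $\eta_{T,n}(h) = I_T(h) - I_{T,n}(h)$ for the error functional. The key structural observation is that $\eta$ enjoys an approximate additivity: for a decomposition of $[0,T]$ along the grid points, $\eta_{T,n}(h)$ splits into a sum of ``block'' contributions, each of which, conditionally on the value of $X$ at the left endpoint of the block, is itself an error functional of the same type over a shorter horizon. Combining this with the Markov property, one obtains an estimate of the form $E_x|\eta_{T,n}(h)|^p \leq C\,p\cdot \big(\sup_x E_x|\eta_{?,n}(h)|\big)\cdot\big(\text{lower-order moments}\big)$, exactly as in Lemma~2.1 of \cite{Sznitman}. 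The factor $\log n/n$ will be inherited from Proposition~\ref{p21}, but because the error over a sub-block of length $T/m$ starting from a grid point contributes a $\log(n/m)/(n/m)$-type term and these must be summed over $m$ blocks, one must be slightly careful: the point is that $\sum_{\text{blocks}}$ of the per-block first-moment bound is again $O(\log n / n)$ times a constant, since $m \cdot \tfrac{\log(n/m)}{n} \le \tfrac{\log n}{n}\cdot m$ is not quite what is wanted — rather one uses the full-horizon bound $\sup_x|E_xI_T(h) - E_xI_{T,n}(h)| \le (\log n/n)\,C_{T,Q,V}\|h\|_V V(|x|)$ directly, and the square-root in the exponent $p/2$ comes from pairing: the expansion of $|\eta|^p$ produces products where roughly half the factors are ``diagonal'' (second-moment) terms of size $\log n/n$ and the combinatorics yields the power $p/2$.

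Concretely, I would proceed as follows. First, reduce to $h$ bounded (or truncate using \textbf{H1}): replacing $h$ by $h/V$-scaling arguments and the submultiplicativity of $V$, one controls the weight $V(|x|)$ uniformly; the integrability hypothesis $\int V^p(T^{1/\alpha}|x|)Q(x)\,dx<\infty$ is precisely what makes all the $p$-th moment integrals appearing below finite. Second, establish the second-moment bound $E_x|\eta_{T,n}(h)|^2 \le C(\log n/n)\|h\|_V^2 V^2(|x|)$ by writing $E_x|\eta_{T,n}(h)|^2 = 2\int\!\!\int_{0<s<t<T}(\cdots)$ and using the Markov property to reduce the inner time integral to a first-moment (characteristic-type) expression that is handled by Proposition~\ref{p21} applied with shifted initial point $X_s$; taking $E_x$ of the resulting bound and integrating $\int V^2(|X_s|)$ against the density bound \eqref{dens_bound} (using submultiplicativity to factor out $V^2(|x|)$ and the finiteness of $\int V^2(T^{1/\alpha}|z|)Q(z)\,dz$) closes this step. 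Third, run the induction: assuming the bound $E_x|\eta_{T,n}(h)|^{2k} \le C_k(\log n/n)^k\|h\|_V^{2k}V^{2k}(|x|)$, expand $|\eta_{T,n}(h)|^{2k+2}$, peel off two time integrations, apply the Markov property and the $2k$-th bound to the shifted process, and re-integrate the weight — each step gaining one factor of $\log n/n$ and adjusting $V$-powers, which is why the final exponent is $p/2$ (for general real $p\ge2$, apply Jensen/H\"older to pass from even integers to all $p$).

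The main obstacle I anticipate is the bookkeeping in the Khas'minskii-type expansion: because $I_{T,n}(h)$ is emphatically \emph{not} an additive functional (the integral sum does not decompose cleanly along arbitrary grid points — only along grid points that are themselves of the form $kT/n$), one must choose the block decomposition to be \emph{compatible with the grid}, i.e.\ cut $[0,T]$ at points $jT/m$ where $m \mid n$, or more carefully handle the remainder block. Getting the conditional/Markov step right — showing that, given $\mathcal F_{jT/m}$, the remaining error is distributed as an error functional over $[0,T-jT/m]$ started from $X_{jT/m}$ with the \emph{same} step size $T/n$ — requires verifying that the shifted integral sum still has equally-spaced nodes, which it does precisely because of the grid-compatibility. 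The second potential difficulty is keeping the $V$-weight under control through the iteration: each application of the Markov property inserts a factor $V^{\text{(power)}}(|X_s|)$ that must be integrated against $p_s(x,\cdot)$, and one must repeatedly invoke submultiplicativity $V(|x| + t^{1/\alpha}|z|) \le V(|x|)V(t^{1/\alpha}|z|) \le V(|x|)V(T^{1/\alpha}|z|)$ together with the hypothesis that the relevant power of $V$ is $Q$-integrable; tracking that the power of $V$ appearing never exceeds $p$ is the content of the constant $C$ depending on $T,Q,V,p$ only.
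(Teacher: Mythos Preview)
Your intuition is right that the $L_p$-bound must be bootstrapped from the weak estimate of Proposition~\ref{p21} via the Markov property, and your second-moment sketch is essentially correct (modulo the detail that one must condition at the next grid point $\zeta_n(s)$, not at $s$ itself, so that $\Delta_n(s)=h(X_s)-h(X_{\eta_n(s)})$ becomes measurable; the leftover piece $\int_s^{\zeta_n(s)}\Delta_n(t)\,dt$ is handled crudely and contributes $O(1/n)$). However, the induction step you propose has a genuine gap. When you ``peel off two time integrations'' from the ordered expansion of $\eta_{T,n}^{2k+2}$ and apply the $2k$-th moment bound to the shifted process, you are forced to take absolute values on the remaining two factors $\Delta_n(s_1)\Delta_n(s_2)$ (because the conditional $(2k)$-th moment bound is one-sided). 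But $\int\!\!\int_{s_1<s_2} E_x\bigl[|\Delta_n(s_1)||\Delta_n(s_2)|\,V^{2k}(|X_{\zeta_n(s_2)}|)\bigr]\,ds_1\,ds_2$ carries no smallness in $n$: it is of order $T^2\|h\|_V^2 V^{2k+2}(|x|)$. Thus the induction only reproduces the factor $(\log n/n)^k$ and never gains the additional $(\log n/n)$ needed to reach $(\log n/n)^{k+1}$. Khas'minskii's lemma works because the functional there is \emph{nonnegative}, so absolute values cost nothing; here the error functional is signed, and the smallness comes precisely from the cancellation encoded in Proposition~\ref{p21}, which is destroyed once you pass to $|\Delta_n|$. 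The block decomposition you discuss in the first paragraph does not rescue this, for the same reason.

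The paper's proof avoids this difficulty by a different device: instead of the full multilinear expansion, it uses the one-step identity
\[
|J_{T,n}|^p = p(p-1)\int_0^T |J_{s,n}|^{p-2}\,\Delta_n(s)\Bigl(\int_s^T\Delta_n(t)\,dt\Bigr)ds,
\]
valid for any real $p\ge 2$ by two applications of Newton--Leibniz. The inner $t$-integral is split at $\zeta_n(s)$; the far part is conditioned on $\mathcal F_{\zeta_n(s)}$ and controlled by Proposition~\ref{p21}, while the factor $|J_{s,n}|^{p-2}$ is \emph{not} bounded inductively but separated by H\"older with exponents $p/(p-2)$ and $p/2$. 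This yields the closed inequality
\[
E_x|J_{T,n}|^p \le C\Bigl(E_x\!\int_0^T|J_{s,n}|^p\,ds\Bigr)^{1-2/p}\Bigl(\frac{\log n}{n}\Bigr)\|h\|_V^2 V^2(|x|),
\]
which holds with $T$ replaced by any $t\le T$; integrating in $t$ makes both sides involve the same quantity $E_x\int_0^T|J_{s,n}|^p\,ds$, and resolving algebraically gives the bound $(\log n/n)^{p/2}$ in one shot, with no induction, no even-integer restriction, and no block decomposition.
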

\begin{proof} Denote, for $t\in[kT/n, (k+1)T/n)$,
\[
\eta_n(t)={kT\over n}, \qquad\zeta_n(t)=
{(k+1)T\over n},
\]
and write the difference $I_{t}(h)-I_{t,n}(h)$ in the integral
form:
\[
J_{t,n}(h):=I_{t}(h)-I_{t,n}(h)=\int
_0^t \Delta_n(s)ds, \quad\Delta
_n(s):=h(X_s) - h(X_{\eta_n (s)}).
\]
Hence, this difference is an absolutely continuous function of $t$, and
using the Newton--Leibnitz formula twice, we get
\[
\bigl|J_{T,n}(h)\bigr|^p=p(p-1)\int_0^T
\bigl|J_{s,n}(h)\bigr|^{p-2}\Delta_n(s)
\biggl(\int_s^T\Delta_n(t)\, dt \biggr)ds.
\]
We then write
\[
\bigl|J_{T,n}(h)\bigr|^p=p(p-1)
\bigl(H_{T,n,p}^1(h)+H_{T,n,p}^2(h)\bigr)\leq p(p-1)
\bigl(\tilde H_{T,n,p}^1(h)+H_{T,n,p}^2(h)\bigr),
\]
where
\begin{align*}
H_{T,n,p}^1(h)
&{}=
\int_0^T
\bigl|J_{s,n}(h)\bigr|^{p-2}\Delta_n(s)
\biggl(\int_s^{\zeta_n(s)}\Delta_n(t)\, dt \biggr)ds,\\
\tilde H_{T,n,p}^1(h)
&{}=\int_0^T
\bigl|J_{s,n}(h)\bigr|^{p-2}
\bigl|\Delta_n(s)\bigr|
\biggl\llvert\int_s^{\zeta_n(s)}\Delta_n(t)\, dt \biggr\rrvert
ds,\\
H_{T,n,p}^2(h)
&{}=\int_0^T
\bigl|J_{s,n}(h)\bigr|^{p-2}\Delta_n(s)
\biggl(\int_{\zeta_n(s)}^T\Delta_n(t)\, dt \biggr)ds.
\end{align*}
Let us estimate separately the expectations of $\tilde H_{T,n,p}^1(h)$ and
$H_{T,n,p}^2(h)$. By the H\"older inequality,
\begin{align*}
E_x\tilde H_{T,n,p}^1(h)
&{} \leq
\biggl(E_x\int_0^T
\bigl|J_{s,n}(h)\bigr|^{p}\, ds \biggr)^{1-2/p}\\
&\quad{}\times
\biggl(E_x\int_0^T
\bigl|\Delta_n(s)\bigr|^{p/2} \biggl\llvert\int_s^{\zeta_n(s)}
\Delta_n(t)\, dt \biggr\rrvert^{p/2}\, ds
\biggr)^{2/p}.
\end{align*}
Again by the H\"older inequality,
\begin{align*}
&E_x\int_0^T
\bigl|\Delta_n(s)\bigr|^{p/2}
\biggl\llvert\int_s^{\zeta_n(s)}\Delta_n(t)\,
dt \biggr\rrvert^{p/2}\, ds\\
&\quad{}\leq \biggl({T\over n}
\biggr)^{p/2-1}\int_0^T\int
_s^{\zeta_n(s)}E_x
\bigl|\Delta_n(s)\bigr|^{p/2}
\bigl|\Delta_n(t)\bigr|^{p/2}\, dtds.
\end{align*}
Because $t\in[s, \zeta_n(s)]$, we have $\eta_n(t)=\eta_n(s)$, and,
consequently,
\begin{align}
&{}E_x
\bigl|\Delta_n(s)\bigr|^{p/2}
\bigl|\Delta_n(t)\bigr|^{p/2}\nonumber\\
&\quad{} =
E_x
\bigl|h(X_s)-h(X_{\eta_n(s)})\bigr|^{p/2}
\bigl|h(X_t)-h(X_{\eta_n(s)})\bigr|^{p/2}
\nonumber\\
&\quad{}\leq
\bigl(E_x
\bigl|h(X_s)-h(X_{\eta_n(s)})\bigr|^{p}
\bigr)^{1/2}
\bigl(E_x
\bigl|h(X_t)-h(X_{\eta_n(s)})\bigr|^{p}
\bigr)^{1/2}
\nonumber\\
&\quad{}\leq
\|h\|_V^p2^{p-1}
\bigl(E_x
\bigl(V^p\bigl(|X_s|\bigr)+V^p\bigl(|X_{\eta_n(s)}|\bigr)
\bigr)
\bigr)^{1/2}\nonumber\\
&\qquad{}\times
\bigl(E_x
\bigl(V^p\bigl(|X_t|\bigr)+V^p\bigl(|X_{\eta_n(s)}|\bigr)
\bigr)
\bigr)^{1/2}.
\label{13}
\end{align} %
 By the properties of $V$ and the bound (\ref{dens_bound}) we have
\begin{align*}
E_xV^{p}\bigl(|X_r|\bigr)
&{}=
\int_{\Re^d}p_{r}(x,y)V^{p}\bigl(|y|\bigr)\,dy\\
&{}
\leq
C_Tr^{-d/\alpha}\int_{\Re^d}Q
\bigl(r^{-1/\alpha}(x-y) \bigr)V^{p}\bigl(|y|\bigr)\, dy
\\
&{}\leq C_TV^{p}\bigl(|x|\bigr)
\biggl(\int_{\Re^d}Q(y)V^{p}\bigl(T^{1/\alpha}|y| \bigr)\, dy \biggr)
\end{align*} %
for any $r\in(0, T]$. Using this bound with $r=t,s, \eta_n(s)$ and
recalling that
$|\zeta_n(s)-s|\leq 1/n$, we get
\[
E_x\tilde H_{T,n,p}^1(h)\leq C
\biggl(E_x\int_0^T
\bigl|J_{s,n}(h)\bigr|^{p}\, ds \biggr)^{1-2/p}
\biggl({1\over n} \biggr)\|h\|_V^2V^{2}\bigl(|x|\bigr)
\]
with constant $C$ depending on $T,Q,V,p$ only.

Next, observe that, for every $s$, the variables
\[
\Delta_n(s), \qquad|J_{s,n}(h)|^{p-2}
\Delta_n(s)
\]
are $\Ff_{\zeta_n(s)}$-measurable. Hence,
\begin{align*}
E_xH_{T,n,p}^2(h)
&{} =E_x
\biggl(\int_0^T
\bigl|J_{s,n}(h)\bigr|^{p-2} \Delta_n(s)E_x
\biggl(\int_{\zeta_n(s)}^T \Delta_n(t)\, dt \big|
\Ff_{\zeta_n(s)}
\biggr)ds
\biggr)
\\
&{}\leq
E_x
\biggl(\int_0^T
\bigl|J_{s,n}(h)\bigr|^{p-2}
\bigl|\Delta_n(s)\bigr|
\biggl\llvert E_x
\biggl(\int_{\zeta_n(s)}^T\Delta_n(t)\, dt
\big|\Ff_{\zeta_n(s)}
\biggr)
\biggr\rrvert ds
\biggr).
\end{align*}
By Proposition \ref{p21} and the Markov property of $X$ we have
\[
\biggl\llvert E_x \biggl(
\int_{\zeta_n(s)}^T\Delta_n(t)\, dt
\big|
\Ff_{\zeta_n(s)}
\biggr)
\biggr\rrvert
\leq
C \biggl({\log n\over n} \biggr)
\|h\|_VV
\bigl(|X_{\zeta_n(s)}|\bigr).
\]
Hence, again, using the H\"older inequality we get
\begin{align*}
E_xH_{T,n,p}^2(h)
&{}\leq C \biggl( {\log n\over n} \biggr)\|h\|_V
\biggl(E_x\int_0^T
\bigl|J_{s,n}(h)\bigr|^{p}\, ds
\biggr)^{1-2/p}
\\
&\quad{}\times
\biggl(E_x\int_0^T
\bigl|\Delta_n(s)\bigr|^{p/2}V^{p/2}
\bigl(|X_{\zeta_n(s)}|\bigr)\,ds
\biggr)^{2/p}.
\end{align*}
Similarly to (\ref{13}), we have
\[
E_x
\bigl|\Delta_n(s)\bigr|^{p/2}V^{p/2}
\bigl(|X_{\zeta_n(s)}|\bigr)
\leq
C V^{p}\bigl(|x|\bigr)\|h\|^{p/2}_V.
\]
Hence, the above bounds for $E_x\tilde H_{T,n,p}^1(h)$ and
$E_xH_{T,n,p}^2(h)$  finally yield
%
\begin{equation}
\label{14}
E_x
\bigl|J_{T,n}(h)\bigr|^{p}
\leq
C
\biggl(
E_x\int_0^T
\bigl|J_{s,n}(h)\bigr|^{p}\,ds
\biggr)^{1-2/p}
\biggl(
{\log n\over n}
\biggr)
\|h\|_V^2V^{2}
\bigl(|x|\bigr)
\end{equation}
with a constant $C$ depending on $T,Q,V,p$ only. It can be seen easily
that in this inequality one can write  arbitrary $t\leq T$ instead
of $T$, with the same constant $C$. Taking the integral over $t$, we get
\[
E_x\int_0^T
\bigl|J_{t,n}(h)\bigr|^{p}\, dt
\leq
CT
\biggl(
E_x\int_0^T
\bigl|J_{s,n}(h)\bigr|^{p}\,ds
\biggr)^{1-2/p}
\biggl(
{\log n\over n}
\biggr)\| h\|_V^2V^{2}
\bigl(|x|\bigr).
\]
Because $\|h\|_V<\infty$ and $V^p$ satisfies the integrability
condition from the condition of the theorem, the left-hand side
expression in the last inequality is finite. Hence, resolving this
inequality, we get
\[
E_x\int_0^T
\bigl|J_{s,n}(h)\bigr|^{p}\,ds
\leq
(CT)^{p/2}
\biggl(
{\log n\over n}
\biggr)^{p/2}\|h\|_V^pV^{p}
\bigl(|x|\bigr),
\]
which, together with (\ref{14}), gives the required statement.
\end{proof}

\subsection{An improved approximation rate for a H\"older continuous $h$}\begingroup\abovedisplayskip=7pt\belowdisplayskip=7pt

In this section, we consider the case where $h$ has the following
additional regularity property.
\begin{enumerate}
\item[\textbf{H2.}]
The function is H\"older continuous with  index
$\gamma
\in(0,1]$, that is,
\[
\|h\|_\gamma:=\sup_{x\not=y}{|h(x)-h(y)|\over|x-y|^\gamma}<
\infty.
\]
\end{enumerate}
An additional regularity of $h$ allows one to improve the accuracy of
the previous estimates. Namely, the following statement holds.
\begin{thm}\label{t2}
Assume that \textbf{X}, \textbf{H1}, and
\textbf{H2} hold. Then, for every $p\geq2$ such that
\[
\int_{\Re^d}|x|^{\gamma p} Q(x)\, dx<\infty
\]
and
\[
\int_{\Re^d}V^{p/2}
\bigl(T^{1/\alpha}|x|\bigr)Q(x) \, dx<\infty,
\]
we have
\[
E_x
\bigl|I_{T}(h)-I_{T,n}(h)\bigr|^p
\leq
C
\biggl({\log n\over n} \biggr)^{p/2}
n^{-{(\gamma p)/(2\alpha)}}
\|h\|_{\gamma}^{p/2}
\bigl(\|h\|_{\gamma}^{p/2}+\|h\|^{p/2}_V V^{p/2}
\bigl(|x|
\bigr)
\bigr)
\]
with constant $C$ depending on $T,Q,V,p,\gamma$ only.
\end{thm}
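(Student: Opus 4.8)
The strategy is to rerun the proof of Theorem~\ref{t1} essentially verbatim, but to exploit the H\"older continuity \textbf{H2} at exactly the two places where the increments $\Delta_n(s)=h(X_s)-h(X_{\eta_n(s)})$ are estimated crudely via $\|h\|_V$. The point is that on the block $s\in[kT/n,(k+1)T/n)$ the time lag $s-\eta_n(s)$ is at most $T/n$, so a H\"older bound combined with a moment estimate for the increment $X_s-X_{\eta_n(s)}$ of the process gains an extra factor of order $n^{-\gamma/\alpha}$ in each such term; since $\Delta_n$ appears (in $L^{p/2}$-sense) through two factors, one from $\tilde H^1_{T,n,p}$ or from the product $|\Delta_n(s)|\,|\Delta_n(t)|$ and one implicitly squared, this will produce the announced $n^{-\gamma p/(2\alpha)}$. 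Concretely, using \eqref{dens_bound} one has, for $r=s-\eta_n(s)\le T/n$,
\[
E_x|X_s-X_{\eta_n(s)}|^{\gamma p}
= E_x\, E_{X_{\eta_n(s)}}|X_r-X_0|^{\gamma p}
\le C\, r^{\gamma p/\alpha}\int_{\Re^d}|z|^{\gamma p}Q(z)\,dz,
\]
which is finite by the first integrability hypothesis; hence
\[
E_x|\Delta_n(s)|^{p}\le \|h\|_\gamma^{p}\,E_x|X_s-X_{\eta_n(s)}|^{\gamma p}
\le C\,\|h\|_\gamma^{p}\, n^{-\gamma p/\alpha}.
\]

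First I would set up $J_{T,n}(h)$, the Newton--Leibniz identity, and the splitting $|J_{T,n}(h)|^p=p(p-1)(H^1_{T,n,p}+H^2_{T,n,p})\le p(p-1)(\tilde H^1_{T,n,p}+H^2_{T,n,p})$ exactly as in Theorem~\ref{t1}. For $E_x\tilde H^1_{T,n,p}(h)$ I apply H\"older to peel off $(E_x\int_0^T|J_{s,n}|^p\,ds)^{1-2/p}$ and then, inside the remaining factor, replace the estimate \eqref{13} by the sharper one: bound $|\Delta_n(s)|^{p/2}|\Delta_n(t)|^{p/2}$ using $\|h\|_\gamma$ and Cauchy--Schwarz to get $(E_x|X_s-X_{\eta_n(s)}|^{\gamma p})^{1/2}(E_x|X_t-X_{\eta_n(s)}|^{\gamma p})^{1/2}\le C\,n^{-\gamma p/\alpha}$; combining with the extra $(T/n)^{p/2-1}$ from the inner H\"older step (exactly as in Theorem~\ref{t1}) this gives a contribution of order $n^{-\gamma p/\alpha}\cdot n^{-(p/2-1)}$ times $\|h\|_\gamma^p$, times $(E_x\int_0^T|J_{s,n}|^p\,ds)^{1-2/p}$. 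This is the term that yields the pure $n^{-\gamma p/(2\alpha)}\|h\|_\gamma^{p}$ part (it is in fact of smaller order than needed, so it is harmless).

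For $E_xH^2_{T,n,p}(h)$ I again use the $\Ff_{\zeta_n(s)}$-measurability of $\Delta_n(s)$ and $|J_{s,n}(h)|^{p-2}\Delta_n(s)$, pass to the conditional expectation, and invoke Proposition~\ref{p21} together with the Markov property to get
\[
\Bigl|E_x\Bigl(\int_{\zeta_n(s)}^T\Delta_n(t)\,dt\,\big|\,\Ff_{\zeta_n(s)}\Bigr)\Bigr|
\le C\Bigl(\tfrac{\log n}{n}\Bigr)\|h\|_V V\bigl(|X_{\zeta_n(s)}|\bigr).
\]
Then H\"older separates $(E_x\int_0^T|J_{s,n}|^p\,ds)^{1-2/p}$ and leaves $(E_x\int_0^T|\Delta_n(s)|^{p/2}V^{p/2}(|X_{\zeta_n(s)}|)\,ds)^{2/p}$; here I apply Cauchy--Schwarz to split $|\Delta_n(s)|^{p/2}V^{p/2}(|X_{\zeta_n(s)}|)$ and use $E_x|\Delta_n(s)|^p\le C\|h\|_\gamma^p n^{-\gamma p/\alpha}$ on one factor and the moment bound $E_xV^p(|X_{\zeta_n(s)}|)\le C V^p(|x|)\int Q V^p(T^{1/\alpha}|y|)\,dy$ — wait, only $V^{p/2}$ is assumed integrable, so I instead keep $V^{p/2}(|X_{\zeta_n(s)}|)$ unsquared: $E_x|\Delta_n(s)|^{p/2}V^{p/2}(|X_{\zeta_n(s)}|)\le(E_x|\Delta_n(s)|^p)^{1/2}(E_xV^p(|X_{\zeta_n(s)}|))^{1/2}$ still needs $V^p$. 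The correct route is $E_x|\Delta_n(s)|^{p/2}V^{p/2}(|X_{\zeta_n(s)}|)\le\|h\|_\gamma^{p/2}(E_x|X_s-X_{\eta_n(s)}|^{\gamma p})^{1/2}\cdot\cdots$ — more cleanly: bound $|\Delta_n(s)|^{p/2}\le\|h\|_\gamma^{p/2}|X_s-X_{\eta_n(s)}|^{\gamma p/2}$, apply Cauchy--Schwarz to the pair $|X_s-X_{\eta_n(s)}|^{\gamma p/2}$ and $V^{p/2}(|X_{\zeta_n(s)}|)$, giving $(E_x|X_s-X_{\eta_n(s)}|^{\gamma p})^{1/2}(E_xV^p(|X_{\zeta_n(s)}|))^{1/2}$; since only $V^{p/2}$ is integrable I instead split as $(E_x|X_s-X_{\eta_n(s)}|^{2\gamma p}\cdot)$... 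This is precisely the bookkeeping point to get right. The clean fix is: choose the H\"older split so that $V$ enters only to the power $p/2$, i.e. bound $E_x|\Delta_n(s)|^{p/2}V^{p/2}(|X_{\zeta_n(s)}|)\le C\,n^{-\gamma p/(2\alpha)}\|h\|_\gamma^{p/2}\,V^{p/2}(|x|)$ by first conditioning on $\Ff_{\eta_n(s)}$, using the H\"older bound and \eqref{dens_bound} to integrate out $X_s$ and $X_{\zeta_n(s)}$ against $Q$ to powers $\gamma p/2$ and — here submultiplicativity of $V$ and $V\ge1$ let one factor $V^{p/2}(|X_{\eta_n(s)}|)$ out and integrate a $V^{p/2}$-weighted density, which is exactly the second integrability hypothesis. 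Assembling, $E_xH^2_{T,n,p}(h)\le C(\tfrac{\log n}{n})n^{-\gamma p/(2\alpha)}\|h\|_V\|h\|_\gamma^{p/2}V^{p/2}(|x|)(E_x\int_0^T|J_{s,n}|^p\,ds)^{1-2/p}$.

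Putting the two estimates together gives a self-improving inequality of the form
\[
E_x|J_{T,n}(h)|^p\le C\,a_n\,\Bigl(E_x\!\int_0^T|J_{s,n}(h)|^p\,ds\Bigr)^{1-2/p},\qquad
a_n:=\Bigl(\tfrac{\log n}{n}\Bigr)n^{-\gamma p/(2\alpha)}\|h\|_\gamma^{p/2}\bigl(\|h\|_\gamma^{p/2}+\|h\|_V^{p/2}V^{p/2}(|x|)\bigr),
\]
valid with $t$ in place of $T$ for every $t\le T$ and the same $C$. Integrating in $t$, using finiteness of $E_x\int_0^T|J_{s,n}|^p\,ds$ (guaranteed by $\|h\|_V<\infty$ and the $V^{p/2}$-integrability, via the crude bound of Theorem~\ref{t1} applied with exponent $p$ — or directly, since $\|h\|_V<\infty$ suffices for each $|J_{s,n}|\le C$), and resolving the inequality $Y\le C\,a_n\,(TY)^{1-2/p}$ for $Y=E_x\int_0^T|J_{s,n}|^p\,ds$ yields $Y\le (C T^{1-2/p})^{p/2}a_n^{p/2}$ — i.e. $Y\le C'a_n^{p/2}$, and feeding this back into the displayed inequality gives $E_x|J_{T,n}(h)|^p\le C a_n\cdot(C'a_n^{p/2})^{1-2/p}=C'' a_n^{p/2}$, which is exactly the claimed bound after unwinding $a_n^{p/2}=(\tfrac{\log n}{n})^{p/2}n^{-\gamma p/(2\alpha)\cdot p/2}$... careful: $a_n^{p/2}$ raises the $n^{-\gamma p/(2\alpha)}$ factor to the power $p/2$ as well, which over-shoots. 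So in fact one should \emph{not} raise the H\"older gain to the power $p/2$: the correct normalization is to keep the $n^{-\gamma p/(2\alpha)}$ factor \emph{outside} the self-improvement, i.e. track the inequality as $E_x|J_{T,n}|^p\le C\,n^{-\gamma p/(2\alpha)}\,b_n\,(E_x\int|J_{s,n}|^p)^{1-2/p}$ with $b_n=(\tfrac{\log n}{n})\|h\|_\gamma^{p/2}(\cdots)$ — but then $n^{-\gamma p/(2\alpha)}$ does get amplified by the resolution. The resolution of this tension — deciding exactly which powers of $n^{-1/\alpha}$, $n^{-1}$, $\log n$ to carry through the two H\"older steps and the Gronwall-type resolution so that the final exponents come out to $(\log n/n)^{p/2}n^{-\gamma p/(2\alpha)}$ rather than something larger — is the one genuinely delicate piece of bookkeeping, and it hinges on noting that the H\"older gain should enter \textbf{once per factor of $\Delta_n$** and that after the resolution step (which multiplies exponents by $p/2$ relative to a single-$\Delta_n$ estimate) one recovers the stated rate because the ``base'' single-step estimate already carries $n^{-\gamma p/(2\alpha)\cdot(2/p)}=n^{-\gamma/\alpha}$ worth of gain per unit; I would organize the write-up to make this accounting transparent by first proving the analogue of \eqref{14} with the extra factor $n^{-\gamma p/(2\alpha)}$ attached to $\|h\|_\gamma^{p/2}(\|h\|_\gamma^{p/2}+\|h\|_V^{p/2}V^{p/2}(|x|))$ in place of $\|h\|_V^2V^2(|x|)$, and then invoking the same resolution lemma as in Theorem~\ref{t1}. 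The main obstacle is thus not conceptual but this exponent-tracking through the double H\"older inequality and the final self-improving step; everything else is a direct transcription of the proof of Theorem~\ref{t1} with the moment bound $E_x|X_r-X_0|^{\gamma p}\le Cr^{\gamma p/\alpha}\int|z|^{\gamma p}Q(z)\,dz$ inserted.
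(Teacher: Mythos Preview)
Your approach is exactly the paper's: rerun the decomposition of Theorem~\ref{t1} and replace the crude $\|h\|_V$ bounds on $\Delta_n(s)$ by the H\"older estimate $E_x|\Delta_n(s)|^p\le C\|h\|_\gamma^p\,|s-\eta_n(s)|^{\gamma p/\alpha}$, derived from \eqref{dens_bound} precisely as you outline. The paper's estimate of $\tilde H^1_{T,n,p}$ and its use of Proposition~\ref{p21} for $H^2_{T,n,p}$ are identical to what you propose.

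The confusion you flag in the exponent tracking is a concrete slip, not a genuine obstacle: after the H\"older step for $H^2_{T,n,p}$ you correctly isolate the factor $\bigl(E_x\int_0^T|\Delta_n(s)|^{p/2}V^{p/2}(|X_{\zeta_n(s)}|)\,ds\bigr)^{2/p}$ and correctly bound the integrand by $C\|h\|_\gamma^{p/2}n^{-\gamma p/(2\alpha)}V^{p/2}(|x|)$, but then you forget to apply the outer exponent $2/p$ when assembling. With it, that factor becomes $C\|h\|_\gamma\, n^{-\gamma/\alpha}\,V(|x|)$, and the analogue of \eqref{14} reads
\[
E_x|J_{T,n}(h)|^p\le C\Bigl(E_x\!\int_0^T|J_{s,n}|^p\,ds\Bigr)^{1-2/p}\Bigl(\frac{\log n}{n}\Bigr)n^{-\gamma/\alpha}\|h\|_\gamma\bigl(\|h\|_\gamma+\|h\|_V V(|x|)\bigr),
\]
which is the paper's \eqref{15}. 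The resolution step then raises the scalar factor to the power $p/2$, giving exactly $(\log n/n)^{p/2}n^{-\gamma p/(2\alpha)}\|h\|_\gamma^{p/2}\bigl(\|h\|_\gamma+\|h\|_V V(|x|)\bigr)^{p/2}$. So the ``tension'' you describe disappears once the $2/p$ is in place; your proposed analogue of \eqref{14} with $n^{-\gamma p/(2\alpha)}$ and $\|h\|_\gamma^{p/2}(\cdots)$ is simply off by that exponent.

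On the $V^{p/2}$ versus $V^p$ integrability issue: your concern is legitimate, since a bare Cauchy--Schwarz on $|\Delta_n(s)|^{p/2}V^{p/2}(|X_{\zeta_n(s)}|)$ would require $\int Q\,V^p<\infty$. The paper is brief here, but the conditioning route you sketch---condition on $\Ff_{\eta_n(s)}$, use submultiplicativity $V(|X_{\zeta_n(s)}|)\le V(|X_{\eta_n(s)}|)V(|X_{\zeta_n(s)}-X_{\eta_n(s)}|)$, and integrate against $Q$---does work under the stated $V^{p/2}$ hypothesis, so this is not a gap in your plan.
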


\begin{proof} The method of the proof remains the same as that of
Theorem \ref{t1}; hence, we use the same notation. The only new point  is that, instead of the bound
\[
E_x
\bigl|\Delta_n(s)\bigr|^p
\leq
\|h\|_V^p E_x
\bigl(V
\bigl(|X_s|
\bigr)+V
\bigl(
|X_{\eta_n(s)}|
\bigr)
\bigr)^p,
\]
now a more precise inequality is available, based on the H\"older
continuity of~$h$. Namely, we have
\begin{align}
E_x
\bigl|\Delta_n(s)\bigr|^p
&{} =
E_x
\bigl|h(X_s)-h(X_{\eta_n(s)})\bigr|^p\nonumber\\
&{}\leq
\|h\|^p_{\gamma}E_x
\bigl|X_s-X_{\eta_n(s)}\bigr|^{\gamma p}
\leq
C\|h\|^p_{\gamma}
\bigl|s-\eta _n(s)\bigr|^{\gamma p/\alpha},
\label{Lip_bound}
\end{align}
where $C$ depends on $T,Q,p,\gamma$ only.

The last inequality holds due to the following representation. By the
Markov property of $X$, for $r<s$, we have
\[
E_x|X_s-X_r|^{\gamma p}=E_xf(X_r),
\]
where
\begin{align*}
f(z)&{}=
\int_{\mathbb{R}^d}p_{s-r}(z,y)|y-z|^{\gamma p}
\,dy\\
&{}
\leq C_T\int_{\mathbb{R}^d}(s-r)^{-d/\alpha}Q
\bigl((s-r)^{-1/\alpha}(z-y) \bigr)|z-y|^{\gamma p}\,dy
\\
&{} = C_T(s-r)^{\gamma p/\alpha}\int_{\mathbb{R}^d}Q(y)|y|^{\gamma p}
\,dy.
\end{align*}

Thus, for $t\in[s, \zeta_n(s)]$, we have
\begin{align*}
&{}E_x
\bigl|\Delta_n(s)\bigr|^{p/2}
\bigl|\Delta_n(t)\bigr|^{p/2}\nonumber\\
&\quad{}=
E_x
\bigl|h(X_s)-h(X_{\eta_n(s)})\bigr|^{p/2}
\bigl|h(X_t)-h(X_{\eta_n(s)})\bigr|^{p/2}
\\
&\quad{}\leq
\bigl(E_x
\bigl|h(X_s)-h(X_{\eta_n(s)})\bigl|^{p}
\bigr)^{1/2}
\bigl(E_x
\bigl|h(X_t)-h(X_{\eta_n(s)})\bigl|^{p}
\bigr)^{1/2}
\\
&\quad{}\leq
C\|h\|^p_{\gamma}
\bigl|s-\eta_n(s)\bigr|^{\gamma p/(2\alpha)}
\bigl|t-\eta_n(s)\bigr|^{\gamma p/(2\alpha)}\\
&\quad{}\leq
CT\|h\|^p_{\gamma}n^{-(\gamma p)/\alpha}
\end{align*}
and
\[
E_x\tilde H_{T,n,p}^1(h)
\leq
C
\biggl(E_x\int_0^T
\bigl|J_{s,n}(h)\bigr|^{p}\,ds
\biggr)^{1-2/p}
\biggl(
{1\over n}
\biggr)^{1+2\gamma/\alpha}
\|h\| _{\gamma}^2
\]
with  constant $C$ depending on $T,Q,p,\gamma$ only.

Next, using (\ref{Lip_bound}) and (\ref{13}), we have
\[
E_x
\bigl|\Delta_n(s)\bigr|^{p/2}V^{p/2}
\bigl(|X_{\zeta_n(s)}|\bigr)
\leq
C \|h\|^{p/2}_{\gamma}n^{-(\gamma p)/(2\alpha)} V^{p/2}
\bigl(|x|\bigr)
\]
and
\begin{align*}
E_xH_{T,n,p}^2(h)
\leq
C
\biggl(
E_x\int_0^T
\bigl|J_{s,n}(h)\bigr|^{p}\,ds
\biggr)^{1-2/p}
\biggl(
{\log n\over n}
\biggr)n^{-\gamma/\alpha}
\|h\|_V
\|h\|_{\gamma}V
\bigl(|x|\bigr).
\end{align*}

Hence, the previous bounds for $E_x\tilde H_{T,n,p}^1(h)$ and
$E_xH_{T,n,p}^2(h)$ finally yield
\begin{align}
&{}
E_x
\bigl|J_{T,n}(h)\bigr|^{p}\nonumber\\
&\quad{}\leq
C
\biggl(
E_x\int_0^T
\bigl|J_{s,n}(h)\bigr|^{p}\, ds
\biggr)^{1-2/p}
\biggl(
{\log n\over n}
\biggr)n^{-\gamma/\alpha}
\|h\|_{\gamma}
\bigl(
\|h\|_{\gamma}+\|h\|_VV
\bigr(|x|
\bigr)
\bigr)\label{15}
\end{align}
with  constant $C$ depending on $T,Q,V,p,\gamma$ only. Using the same
procedure as that provided at the end of the proof of Theorem 2.1, we
simply have
\begin{align*}
&{}E_x\int_0^T
\bigl|J_{s,n}(h)\bigr|^{p}\,ds
\nonumber\\
&\quad{}
\leq
(CT)^{p/2}
\biggl(
{\log n\over n}
\biggr)^{p/2}
n^{-(\gamma p)/(2\alpha)}
\|h\|_{\gamma}^{p/2}
\bigl(
\|h\|_{\gamma}^{p/2}+\|h\|_V^{p/2}V^{p/2}
\bigl(|x|
\bigr)
\bigr),
\end{align*}
which, together with (\ref{15}), completes the proof.
\end{proof}\endgroup

\subsection{Discussion}

The results of Theorem \ref{t1} and Theorem \ref{t2} should be compared
with Theorem 2.3 in \cite{Kohatsu-Higa}, where, in our notation, the
following bounds were obtained:
\[
E_x
\bigl|
I_T(h)-I_{T,n}(h)
\bigr|^p
\leq
\begin{cases}
Cn^{-p(1+\gamma)/2},&\gamma\in(0,1),\\
C (\log n )^pn^{-p(1+\gamma)/2},&\gamma=1.
\end{cases} %
\]
In Theorem 2.3 of \cite{Kohatsu-Higa}, $h$ satisfies our conditions
\textbf
{H1} and \textbf{H2} with $V(|x|)=e^{C|x|}$, and $X$ is a one-dimensional
diffusion with coefficients that satisfy some smoothness condition
(assumption (H)); in particular, \textbf{X} holds  with $\alpha=2$.
In this case, our bound
\[
C (\log n )^pn^{-p/2-(p\gamma)/4},
\]
given in Theorem \ref{t2}, is somewhat worse. On the other hand, this
bound is of an independent interest because of a wider class of
processes $X$ it applies to.

\section*{Acknowledgements}
The authors are grateful to A. Kohatsu-Higa
for turning their attention to this problem and for helpful
discussions. The first author was partially supported by the Leonard
Euler program, DAAD project No. 57044593.

%

%
\end{document}